\theoremstyle{plain}
\newtheorem{thm}{Theorem}
\newtheorem{prop}{Proposition}
\theoremstyle{definition}
\theoremstyle{remark}
\newtheorem{case}{Case}
\begin{document}
\title[Complexity of homeomorphism between compact metric spaces]{The complexity of the homeomorphism relation between compact metric spaces}
\author{Joseph Zielinski}
\address{Department of Mathematics, Statistics, and Computer Science,
University of Illinois at Chicago,
322 Science and Engineering Offices (M/C 249),
851 S. Morgan Street,
Chicago, IL 60607-7045}
\email{zielinski@math.uic.edu}
\begin{abstract}
	We determine the exact complexity of classifying compact metric spaces up to homeomorphism. More precisely, the homeomorphism relation on compact metric spaces is Borel bi-reducible with the complete orbit equivalence relation of Polish group actions. Consequently, the same holds for the isomorphism relation between separable commutative C*-algebras and the isometry relation between C(K)-spaces. 
\end{abstract}
\maketitle
\section*{Introduction}
In the present note, we address the following problem, namely, what is the complexity of classifying compact metric spaces up to homeomorphism. We prove, specifically, that it has the same complexity as affine homeomorphism of metrizable Choquet simplexes. As a consequence of recent results of M. Sabok, and earlier work of J. D. Clemens, S. Gao, A. S. Kechris, and the authors of \cite{zbMATH06325399}, this shares the same complexity as isometry of separable, complete metric spaces, isomorphism of separable C*-algebras, and the most complicated of the orbit equivalence relations of Polish group actions, in a manner that we now make precise.

Presented with a family of mathematical objects, much of the challenge involved in acquiring an understanding of the class and its members lies in determining when two arbitrary objects are---or are not---isomorphic in an appropriate sense. That is, the difficulty in understanding the objects in the family often resolves to the difficulty in understanding this isomorphism relation itself. In the abstract, this becomes a particular instance of a more general problem: Given an equivalence relation, $ \mathsf{E} $, on a set, $ X $, how difficult can it be to ascertain when two arbitrary points in $ X $ are $ \mathsf{E} $-related? In the last few decades, there has been a great deal of work done to apprehend this question by considering families of equivalence relations and viewing the classification problem for a given relation relative to others from the same collection.

In general, the family under consideration consists of equivalence relations defined on Polish (separable, completely metrizable) spaces, typically for which the relation is an analytic subset of the product space. Often, depending on the particular relation under consideration, one is interested in some subfamily to which it belongs, e.g., Borel equivalence relations, orbit equivalence relations, or relations whose classes have countably-many members. In this framework, to understand an equivalence relation relative to others is to determine where it is situated in the preorder of ``Borel reducibility'' within its family.

Suppose $ \mathsf{E} $ and $ \mathsf{F} $ are equivalence relations on Polish spaces $ X $ and $ Y $, respectively. Write $ \mathsf{E} \leqslant_{B} \mathsf{F} $ ($ \mathsf{E} $ is \textit{Borel reducible} to $ \mathsf{F} $) when there is a Borel-measurable function $ f: X \to Y $ satisfying $ x \mathsf{E}y \iff f(x) \mathsf{F} f(y) $ for all $ x,y \in X $. When also $ \mathsf{F} \leqslant_{B} \mathsf{E} $, they are \textit{Borel bi-reducible}, written $ \mathsf{E} \sim_{B} \mathsf{F} $.
The preorder, $ \leqslant_{B} $, is itself immensely complicated, even when restricted to the subfamilies mentioned above. For more details on the general theory, see \cite{MR2455198} and \cite{MR1725642}.

Often, a ``naturally-occurring'' classification problem admits an analysis in this framework. That is, there is a Polish space that can be viewed as representing a class of mathematical objects, for which a natural notion of isomorphism in that class is an analytic equivalence relation.

For example, it was shown in \cite{MR1950332} and \cite{MR2926279} that the isometric classification of Polish metric spaces is \textit{complete} in the class of those relations reducible to the orbit equivalence relation of a Polish group action (that is, it is a member of the class, and $ \leqslant_{B} $-above every other member). Later, J. Melleray in \cite{MR2308492} demonstrated that the classification of separable Banach spaces by linear isometries is also complete in this class. More recently, in \cite{MR3176617} and \cite{zbMATH06325399}, the isomorphism relation on separable C*-algebras was seen to be a member of this family as well, and it too was proven to be complete in \cite{sabok2013completeness}.

Similarly, the homeomorphic classification of compact metric spaces has been known to also be a member of the above, and, as shown in \cite{MR1725642}, is strictly $ \leqslant_{B} $-above any continuous action of the infinite symmetric group, which aside from the complete relations, are the most complicated in the class for which there presently are natural representatives of known complexity. However, it has remained an open question as to whether homeomorphism of compact metric spaces is complete (e.g., \cite{MR1950332} Problem 10.3). The main result here is that it is indeed a complete relation.
\begin{thm} \label{universal homeomorphism}
Every orbit equivalence relation of a Polish group action is Borel reducible to the homeomorphism relation on compact metric spaces.
\end{thm}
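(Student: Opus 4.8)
The plan is to route the reduction through affine homeomorphism of metrizable Choquet simplexes. The motivating observation is the probability–measure functor $K \mapsto P(K)$: for a compact metric space $K$, the space $P(K)$ of Borel probability measures is a Bauer simplex whose extreme boundary $\partial_e P(K)$ is canonically homeomorphic to $K$, and $P(K) \cong P(L)$ affinely if and only if $K \cong L$. Thus homeomorphism of compacta already embeds, via a Borel reduction, into affine homeomorphism of metrizable Choquet simplexes. By the results quoted above (Sabok, and Gao--Kechris--Clemens, transported through the correspondences between simplexes, C*-algebras, and metric spaces), this latter relation is a complete orbit equivalence relation. Hence it suffices to prove the reverse Borel reduction, namely affine homeomorphism of metrizable Choquet simplexes $\leqslant_{B}$ homeomorphism of compact metric spaces; everything then follows since the source is complete.

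To build this reduction I would construct a Borel assignment $S \mapsto \Phi(S)$ from metrizable Choquet simplexes (coded as compact convex subsets of the Hilbert cube $Q = [0,1]^{\mathbb{N}}$ inside the Effros Borel space) to compact metric spaces, with two properties: (i) \emph{functoriality} --- every affine homeomorphism $S \to S'$ induces, canonically, a homeomorphism $\Phi(S) \to \Phi(S')$, so that affinely equivalent simplexes yield homeomorphic spaces; and (ii) \emph{rigidity} --- every homeomorphism $\Phi(S) \to \Phi(S')$ projects to an affine homeomorphism $S \to S'$. The design principle is to decorate $S$ with a homeomorphism-recognizable ``skeleton'' recording its convex geometry: points are distinguished according to their affine role (membership in $\partial_e S$, local facial dimension, and so on) by attaching, canonically and continuously, pairwise non-homeomorphic continua, so that the convex structure of $S$ can be decoded from the topology of $\Phi(S)$ alone. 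The construction must be canonical enough to remain functorial and Borel, yet rich enough to be rigid.

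The main obstacle is property (ii). The difficulty is intrinsic: homeomorphisms respect neither convexity nor extreme points, so --- unlike the trivial Bauer case, where $\partial_e S$ is already compact and carries all the information --- the decoration of a general (e.g. Poulsen-type) simplex must topologically encode its entire segment structure. I expect to establish rigidity in two stages. First, isolate inside $\Phi(S)$ a subset that is \emph{topologically definable} (characterized by purely local homeomorphism invariants) and canonically affinely homeomorphic to $S$; such a subset is automatically preserved by every homeomorphism. Second, show that the restriction of any homeomorphism to this copy of $S$ is affine, i.e.\ preserves segments; this is the delicate point, which I would handle by encoding a dense family of line segments (or, dually, of continuous affine functions separating points of $S$) via attached continua whose homeomorphism types force their preservation. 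Balancing this rigidity against the functoriality and Borelness demanded by (i) is where the real work concentrates.

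Finally I would check the bookkeeping: that the metrizable Choquet simplexes form a standard Borel space as a Borel subset of the space of closed convex subsets of $Q$; that $S \mapsto \Phi(S)$ is Borel as a map into a standard Borel space of compact metric spaces; and that (i) and (ii) together make $\Phi$ a Borel reduction. Composing $\Phi$ with a reduction of an arbitrary orbit equivalence relation into the complete affine homeomorphism relation on simplexes then yields the theorem.
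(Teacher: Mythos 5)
Your high-level routing is the same as the paper's: start from affine homeomorphism of metrizable Choquet simplexes, which is a complete orbit equivalence relation by Sabok's theorem, and reduce it to homeomorphism of compacta. But the entire technical content of the proof lies in actually constructing that reduction and proving its rigidity, and your proposal stops at stating the desiderata. Worse, the mechanism you sketch for rigidity---distinguishing points ``according to their affine role (membership in $\partial_e S$, local facial dimension, and so on)'' by attaching pairwise non-homeomorphic continua---points in a direction that does not work. The affine structure of a simplex is not a property of individual points but a ternary relation (the midpoint map $(x,y)\mapsto \tfrac12 x+\tfrac12 y$), and for a Poulsen-type simplex the extreme boundary is dense and the points of $S$ are locally topologically indistinguishable from one another, so no assignment of decorations indexed by local invariants of points can recover the segment structure. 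You acknowledge that encoding ``a dense family of line segments'' is the delicate point, but you give no construction, and a compact metric space admits only countably many attached ``homeomorphism types'' to work with, whereas the family of segments is uncountable and is not closed under the homeomorphisms you must defeat.

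The paper's solution to exactly this obstacle is worth contrasting with your sketch. It first replaces ``affine homeomorphism'' by ``homeomorphism preserving the closed ternary relation $\Gamma(S)=\{(x,y,z)\in S^3 : \tfrac12 x+\tfrac12 y=z\}$,'' checking via dyadic approximation and continuity that preserving $\Gamma$ forces affineness. The remaining---and harder---task is then to erase this auxiliary relation: a chain of reductions converts ``homeomorphism of $X$ carrying a closed $R\subseteq X^3$ to $S\subseteq Y^3$'' into plain homeomorphism. This uses a construction $I(X,A)$ that marks a closed subset $A$ by a sequence of isolated points accumulating exactly on it (with a back-and-forth argument showing any homeomorphism of the marked spaces must preserve $A$), coordinate slices $\{\widetilde a_n\}\times \widetilde X^2$, etc., to rigidify the product structure of $\widetilde X^3$ and recover a diagonal homeomorphism $g$ with $g^3=f\restriction \widetilde X^3$, a passage through sequences of sets up to permutation, and the Z-set extension property in the Hilbert cube. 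None of these ideas, nor substitutes for them, appear in your proposal, so as it stands the argument has a genuine gap precisely at the step you identify as ``where the real work concentrates.''
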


The proofs below consist of a preliminary construction followed by a chain of reductions, beginning with the relation of affine homeomorphism of Choquet simplexes, and culminating in the homeomorphism relation for compact spaces. As the former is a complete relation, it then follows that they are are bi-reducible. Additionally, while we noted above that linear isometry of separable Banach spaces and isomorphism of separable C*-algebras were shown to have the same complexity as a complete orbit equivalence relation in \cite{MR2308492} and \cite{sabok2013completeness}, in the latter case it was established that this complexity is achieved on the subclass of simple AI algebras.
Similarly, it will be a corollary of our result (as observed in \cite{MR1950332} Ch.\ 10, and \cite{MR3176617}) that these are still complete relations when restricted to the respective subclasses of C(K)-spaces and commutative C*-algebras.

\subsection*{Acknowledgments}
I am grateful to my advisor, Christian Rosendal, for many enlightening conversations, and to Marcin Sabok and Julien Melleray for their helpful comments on earlier drafts of this paper.

\subsection*{Preliminaries}
For a Polish space, $ X $, let $ K(X) = \{A \subseteq X \mid A \text{ is compact} \} $ denote the hyperspace of compact subsets of $ X $. We endow $ K(X) $ with the \textit{Vietoris topology}, a Polish topology induced by the Hausdorff metric, in which $ K(X) $ is compact whenever $ X $ is (see \cite{MR1321597} 4.F).
Let $ \mathcal{Q} = [0,1]^{\mathbb{N}} $ denote the Hilbert cube. Every separable metric space embeds homeomorphically into $ \mathcal{Q} $, so in particular $ \mathcal{Q} $ contains a homeomorphic copy of every compact metric space, and we view $ K(\mathcal{Q}) $ as the space of metrizable compact spaces.

We will, at a later stage, need the notion of a Z-set, specifically the Z-sets in $ \mathcal{Q} $. The Z-sets of a topological space form an ideal of closed sets, and are central to the study of infinite-dimensional topology. We will not need a precise definition for what follows, but will mention here some important features. First, one can embed $ \mathcal{Q} $ into itself so that it (and, consequently, its closed subsets) are Z-sets. Second, any homeomorphism $ A \to B $ between Z-sets extends to a homeomorphism $ \mathcal{Q} \to \mathcal{Q} $. For more details, see chapter 5 of \cite{MR1851014}. We note that it is a direct consequence of these facts (originally observed by Kechris and S. Solecki) that the homeomorphism relation for compact metric spaces Borel-reduces to an orbit equivalence relation of a Polish group action. Indeed, the above embedding, $ \theta $, of $ \mathcal{Q} $ as a Z-set in $ \mathcal{Q} $, induces a map $ \theta^{*}:K(\mathcal{Q}) \to K(\mathcal{Q}) $, which is a reduction of the homeomorphism relation between compact subsets of $ \mathcal{Q} $ to the orbit equivalence relation of the shift action of $ \operatorname{Homeo}(\mathcal{Q}) $ on $ K(\mathcal{Q}) $.
Namely, two sets $ A $ and $ B $ are homeomorphic if and only if their images, $ \theta^{*}(A) $ and $ \theta^{*}(B) $, are homeomorphic, which in turn occurs if and only if---by the extension property---it can be witnessed by an autohomeomorphism of $ \mathcal{Q} $.

It was shown in \cite{MR1425877} that for any Polish group, $ G $, there is a complete orbit equivalence relation for continuous actions of $ G $, and that for universal Polish groups this equivalence relation will, in fact, be complete in the entire class of orbit equivalence relations of Polish group actions. We fix such a complete orbit equivalence relation now, and denote it by $ \mathsf{E}_{\mathrm{grp}} $.

Finally, for a homeomorphism $ f:X \to Y $, we let $ f^{n} $ denote the map, $ f^{n} = (f \times f \times \cdots \times f): X^{n} \to Y^{n} $.

\section*{Homeomorphism of compact metric spaces}
Suppose $ X $ is a compact metric space, and $ A $ a closed subset containing all isolated points of $ X $. Let $ I(X,A) \subseteq X \times [0,1] $ be constructed as follows: Let $ (d_{n})_{n \in \mathbb{N}} $ enumerate, with infinite repetition, a countable dense subset of $ A $. For each $ n $, set $ \widetilde{a}_{n} = (d_{n},\frac{1}{n}) $, and set $ I(X,A) = (X \times \{0\} ) \cup \{\widetilde{a}_{1},\widetilde{a}_{2},...\} $. Then $ I(X,A) $ is a closed subset of $ X \times [0,1] $ and as such is compact, and we identify $ X $ with $ X \times \{0\} $. We may view $ I(X,A) $ as the space $ X $, together with a new sequence of isolated points accumulating on $ A $. In the sequel, whenever this construction is invoked, we will denote these new isolated points with tildes as we have above. There may appear to be an ambiguity, as the definition of the $ \widetilde{a}_{n} $---and therefore of $ I(X,A) $---requires making a choice in $ (d_{n})_{n \in \mathbb{N}} $, our enumeration of a countable dense subset of $ A $. However, it will follow from the proof below that any two such choices result in the same space, up to homeomorphism.

\begin{prop} \label{isolated points}
Let $ X,Y $ be compact metric spaces, with $ A,B $ closed subsets so that $ A $ (resp. $ B $) contains all isolated points of $ X $ (resp. $ Y $). Let $ I(X,A) $ and $ I(Y,B) $ be constructed as above. Then every homeomorphism $ g: X \to Y $ with $ g[A] = B $ extends to a homeomorphism $I(X,A) \to I(Y,B) $. Conversely, if $f: I(X,A) \to I(Y,B) $ is a homeomorphism, then $ f[X] = Y $ and $ f[A] = B $.
\end{prop}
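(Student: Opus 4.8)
The plan is to first isolate two purely topological invariants of $I(X,A)$ and then read off both directions from them. I would begin by showing that the set of isolated points of $I(X,A)$ is exactly $\{\widetilde a_1, \widetilde a_2, \ldots\}$. Each $\widetilde a_n = (d_n, \tfrac1n)$ is isolated because the second coordinates $\{\tfrac1m : m \in \mathbb N\}$ form a discrete subset of $(0,1]$, so a thin horizontal slab around height $\tfrac1n$ meets $I(X,A)$ only in $\widetilde a_n$. Conversely, no point $(x,0)$ is isolated: if $x \notin A$ then $x$ is non-isolated in $X$ (as $A$ contains all isolated points of $X$), while if $x \in A$ then, using that $(d_n)$ is dense in $A$ \emph{with infinite repetition}, one finds indices $n_k \to \infty$ with $d_{n_k} \to x$, whence $\widetilde a_{n_k} \to (x,0)$. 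The same computation shows that the set of accumulation points of $\{\widetilde a_n\}$ is precisely $A \times \{0\}$: any such limit must have second coordinate $0$ since $\tfrac1n \to 0$, and the set of subsequential limits of $(d_n)$ is the closure of its range, namely $A$. Thus, identifying $X$ with $X \times \{0\}$, the subspace $X$ is the set of non-isolated points of $I(X,A)$, and $A$ is the set of accumulation points of the set of isolated points of $I(X,A)$.

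Granting this, the converse direction is immediate. A homeomorphism $f : I(X,A) \to I(Y,B)$ preserves the property of being isolated, so it carries non-isolated points to non-isolated points, giving $f[X] = Y$; and it carries the set of isolated points onto the set of isolated points, hence carries the accumulation points of the former onto those of the latter, giving $f[A] = B$.

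For the forward direction, let $(e_n)$ be the enumeration used in building $I(Y,B)$ and write $\widetilde b_n = (e_n, \tfrac1n)$. Given $g : X \to Y$ with $g[A] = B$, I would define the extension $\widehat g$ to equal $g$ on $X \times \{0\}$ and to send each $\widetilde a_n$ to $\widetilde b_{\pi(n)}$, for a bijection $\pi : \mathbb N \to \mathbb N$ to be constructed. The key point is that $(g(d_n))_n$ is again dense in $B$ and has infinite repetition, so by a back-and-forth argument I can build $\pi$ with $d\big(g(d_n), e_{\pi(n)}\big) \to 0$: alternately serving the least unused index in the domain and in the range, and using density with infinite repetition of both $(g(d_n))$ and $(e_n)$ to guarantee at each step an unused partner whose index and distance can be chosen so that every matched pair satisfies $d\big(g(d_n), e_{\pi(n)}\big) < 2^{-\min(n,\pi(n))}$. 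Since $\pi$ is a bijection of $\mathbb N$ one has $\pi(n) \to \infty$ automatically, so $\min(n,\pi(n)) \to \infty$ and the distances indeed tend to $0$, and $\widehat g$ sends isolated points to isolated points bijectively. Continuity at a base point $(x,0)$ then follows: if $\widetilde a_{n_k} \to (x,0)$ then $d_{n_k} \to x$ and $n_k \to \infty$, so $g(d_{n_k}) \to g(x)$ and $d\big(g(d_{n_k}), e_{\pi(n_k)}\big) \to 0$ force $\widetilde b_{\pi(n_k)} \to (g(x),0) = \widehat g(x,0)$; the inverse is handled symmetrically using $\pi^{-1}$.

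I expect the main obstacle to be the forward direction, specifically the construction of the matching bijection $\pi$ with $d(g(d_n), e_{\pi(n)}) \to 0$ and the verification that the resulting map is continuous at every point of the base $X \times \{0\}$ (the isolated points cause no trouble). The converse, by contrast, is a formal consequence of the homeomorphism-invariance of the isolated-point and accumulation-set structure established at the outset. Taking $X = Y$, $A = B$, and $g = \mathrm{id}$ in the forward direction also yields the earlier assertion that $I(X,A)$ is well defined up to homeomorphism, independently of the chosen enumeration.
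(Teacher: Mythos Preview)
Your proposal is correct and follows essentially the same route as the paper: the converse is obtained from the characterization of $X$ and $A$ as, respectively, the non-isolated points of $I(X,A)$ and the accumulation set of its isolated points, and the forward direction extends $g$ via a back-and-forth matching of the isolated points. The only differences are cosmetic---the paper uses a stage-dependent threshold $\tfrac{1}{k}$ in place of your $2^{-\min(n,\pi(n))}$ and verifies continuity by an explicit neighborhood argument rather than sequentially.
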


\begin{proof}

Suppose $f: I(X,A) \to I(Y,B)$ is a homeomorphism. Since we have assumed that all isolated points of $ X $ are in $ A $, they are now limits of the $ \widetilde{a}_{n} $, and are no longer isolated. So as $ f $ must send isolated points to isolated points and non-isolated points to non-isolated points, $ f $ restricts to a homeomorphism $ X \to Y $, and $ f[\{\widetilde{a}_{1},\widetilde{a}_{2},...\}] = \{\widetilde{b}_{1},\widetilde{b}_{2},... \} $. So then
\begin{align*}
f\left[A \cup \{\widetilde{a}_{1},\widetilde{a}_{2},...\} \right] &= f\left[\overline{\{\widetilde{a}_{1},\widetilde{a}_{2},...\} } \right]\\
&= \overline{\{\widetilde{b}_{1},\widetilde{b}_{2},...\} }\\
&= B \cup \{\widetilde{b}_{1},\widetilde{b}_{2},...\}  
\end{align*}
and so $ f[A] = B $.

Conversely, suppose $ g:X \to Y $ is a homeomorphism with $ g[A] =B $. We extend $ g $ to $ f:I(X,A) \to I(Y,B) $ via a back-and-forth. At odd stages $ k $, if a bijection has been constructed between finite subsets $ \{\widetilde{a}_{n_{1}},\widetilde{a}_{n_{2}},...,\widetilde{a}_{n_{k-1}}\} $ and $ \{\widetilde{b}_{m_{1}},\widetilde{b}_{m_{2}},...,\widetilde{b}_{m_{k-1}}\} $, we extend the domain to include $ \widetilde{a}_{n_{k}} $, where $ n_{k} $ is the least not already among the $ n_{i} $ for which the bijection is defined. Recall that each $ \widetilde{a}_{n} $ is of the form $(d_{n},\frac{1}{n}) $ for a countable dense subset $ (d_{n}) $ of $ A $. Likewise, each $ \widetilde{b}_{m} $ has the form $ (e_{m},\frac{1}{m}) $. So, choose $ \widetilde{b}_{m_{k}} $ to be the least element not already in the range of the bijection, and for which $ \rho_{Y}(e_{m_{k}},g(d_{n_{k}})) < \frac{1}{k}$, where $ \rho_{Y} $ is a fixed compatible metric for $ Y $. At even stages, reverse the roles.

Now extend $ g $ to $ f: I(X,A) \to I(Y,B) $ according to the above construction. We now show that for any $ x \in I(X,A) $ and open neighborhood $ \mathcal{V} $ of $ f(x) $, there is an open neighborhood, $ \mathcal{U} $ of $ x $ with $ f[\mathcal{U}] \subseteq \mathcal{V} $. We may assume $ x \in X $, for if $ x $ is among the $ \widetilde{a}_{n} $, the set $ \mathcal{U} = \{x\} $ is as desired.

Pick $ M \in \mathbb{N} $ so that the basic open set $I(Y,B) \cap [B_{Y}(g(x),\frac{2}{M}) \times [0,\frac{1}{M})] \subseteq \mathcal{V} $. Let $ W \subseteq X $ be $ W = g^{-1}[B_{Y}(g(x),\frac{1}{M})] $ and let $ N \in \mathbb{N} $ be large enough that $ B_{X}(x,\frac{2}{N}) \subseteq W $.

Set $ K_{1} $ to be the largest index $ n $ such that $ \widetilde{a}_{n} \mapsto \widetilde{b}_{m} $ for some $ m \leq M $, set $ K_{2} $ to be the largest $ n $ with $ \widetilde{a}_{n} $ in the domain of the partial bijection constructed by stage $ M $, and set $ K_{3} $ to be the largest $ n $ with $ \widetilde{a}_{n} $ in the domain by stage $ N $. Let $ K = \max\{K_{1},K_{2},K_{3}\}+1 $. We claim that if $ y \in \mathcal{U} = I(X,A) \cap [B_{X}(x,\frac{1}{N}) \times [0,\frac{1}{K})]  $, then $ f(y) \in \mathcal{V} $.

\begin{case}[$ y \in X $]
If $ y \in X $, then $ y \in W $, and so $ f(y) \in f[W \times \{0\}] = B_{Y}(g(x),\frac{1}{M}) \times \{ 0 \} \subseteq \mathcal{V}  $.
\end{case}

\begin{case}[odd stages]
Suppose $ y = \widetilde{a}_{n} $ and $ f(y) = \widetilde{b}_{m} $ for some $ n, m $, and that moreover the assignment $ \widetilde{a}_{n} \mapsto \widetilde{b}_{m} $ was constructed at an odd stage of the back-and-forth. We show $ \widetilde{b}_{m} = (e_{m}, \frac{1}{m}) \in B_{Y}(g(x),\frac{2}{M}) \times [0,\frac{1}{M}) $.  Since $ \widetilde{a}_{n} = (d_{n},\frac{1}{n}) \in \mathcal{U} $, $ \frac{1}{n} < \frac{1}{K} $, and so $ n > K $. Now since $ d_{n} \in W $, $ \rho_{Y}(g(x),g(d_{n})) < \frac{1}{M} $, and since $ n > K_{2} $, the bijection was constructed after stage $ M $, and so $ \rho_{Y}(e_{m},g(d_{n})) < \frac{1}{M} $. Then $ \rho_{Y}(g(x),e_{m}) \leq \rho_{Y}(g(x),g(d_{n})) + \rho_{Y}(g(d_{n}),e_{m}) < \frac{2}{M} $. Also, since $ n > K_{1} $, $ m > M $ and $ \frac{1}{m} < \frac{1}{M} $. So $ \widetilde{b}_{m} \in \mathcal{V} $.
\end{case}

\begin{case}[even stages]
If $ y = \widetilde{a}_{n} $, $ f(y)=\widetilde{b}_{m} $, and the construction was at an even stage, then since $ n > K_{3} $, they were added to the bijection at stage greater than $ N $. Therefore, $ \rho_{X}(d_{n},g^{-1}(e_{m})) < \frac{1}{N} $. So $ \rho_{X}(x,g^{-1}(e_{m})) \leq \rho_{X}(x,d_{n}) + \rho_{X}(d_{n},g^{-1}(e_{m})) < \frac{2}{N}$. Then $ g^{-1}(e_{m}) \in W $, and so $ e_{m} \in B_{Y}(g(x),\frac{1}{M}) $. On the other hand, since $ n > K_{1} $, again $ \frac{1}{m} < \frac{1}{M} $, so $ \widetilde{b}_{m} \in \mathcal{V} $.
\end{case}

So $ f $ is a continuous bijection between compact Hausdorff spaces, and so is a homeomorphism.
\end{proof}

It is worth observing that the requirement that $ A $ contains the isolated points of $ X $ is satisfied trivially in the cases where $ X $ is perfect, or when $ X = A $. But in the first, ``$ X $ has no isolated points'' could easily be replaced by some other topological feature, like, ``$ X $ is path connected, and remains so with the removal of a single point''. In this way, one can code any finite or even countable ordered sequence of subsets of $ X $ by embedding it as a Z-set into $ \mathcal{Q} $ (which has the above property), and marking it and its subsets off as limits of one-dimensional stars of differing valence and vanishing diameter. We will forgo giving the details here, as the simpler version above is sufficient for the constructions of Proposition \ref{countable up to permutation} and Theorem \ref{universal homeomorphism}.

In what follows, it is a routine matter to check that the reductions at each stage are Borel-measurable, and these verifications will be omitted. However, as an illustration, let us show here how we may take the above function, $ I $, to be a Borel map from $ \{(X,A) \in K(\mathcal{Q}) \times K(\mathcal{Q}) \mid A \subseteq X \} $ into $ K(\mathcal{Q} \times [0,1]) $. Fix $ h: \mathbb{N} \to \mathbb{N} $, an enumeration with infinite repetition, and let $ \{s_{n}\}_{n \in \mathbb{N}} $ be the Kuratowski-Ryll-Nardzewski selector functions $ K(\mathcal{Q}) \to \mathcal{Q} $ as in \cite{MR1321597}, Theorem 12.13. Define $ I(X,A) $ as before, taking $ d_{n} = s_{h(n)}(A) $. Then for any open $ U $ in $ \mathcal{Q} \times [0,1] $, $ I(X,A) \cap U \neq \emptyset $ if and only if $ \exists n, (s_{n}(X),0) \in U $ or $ \exists n, (s_{h(n)}(A),\frac{1}{n}) \in U $, a Borel condition by the measurability of the selector functions.\newline

Consider the space $ \{(X,R) \in K(\mathcal{Q}) \times K(\mathcal{Q}^{3}) \mid R \subseteq X^{3} \} $, and let $ \cong_{(3)} $ denote the equivalence relation where $ (X,R) \cong_{(3)} (Y,S) $ if and only if there is a homeomorphism $f: X \to Y $ with $ f^{3}[R] = S $.

\begin{prop} \label{universality of (3)}
$ \mathsf{E}_{\mathrm{grp}} \leqslant_{B} \; \cong_{(3)} $.
\end{prop}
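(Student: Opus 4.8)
The plan is to exploit the universality of $\operatorname{Homeo}(\mathcal{Q})$ as a Polish group. Recall from the preliminaries that $\mathsf{E}_{\mathrm{grp}}$ may be taken to be the complete orbit equivalence relation of the continuous actions of a fixed universal Polish group; by a theorem of Uspenskij, $\operatorname{Homeo}(\mathcal{Q})$ is such a group, so I would set $G = \operatorname{Homeo}(\mathcal{Q})$ and reduce the complete continuous $G$-action to $\cong_{(3)}$. To make the complete action concrete I would invoke the Becker--Kechris theory \cite{MR1425877}, taking the universal Borel $G$-space to be the left-translation action of $G$ on a countable power of the Effros Borel space $F(G)$ of closed subsets of $G$.

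The first substantive step is to linearize the translation action geometrically. Fixing a countable dense set $\{p_{k}\}_{k} \subseteq \mathcal{Q}$, the evaluation map $\iota : G \to \mathcal{Q}^{\mathbb{N}}$ given by $\iota(g) = (g(p_{k}))_{k}$ is a continuous injection, since a homeomorphism of $\mathcal{Q}$ is determined by its values on a dense set, and it intertwines left translation with the coordinatewise tautological action: $\iota(fg) = (f(g(p_{k})))_{k}$, with no drift in the reference points $p_{k}$. Pushing closed subsets of $G$ forward through $\iota$ and taking closures thus converts the universal action into a diagonal action of $G$ on compact subsets of a Hilbert cube, in which every coordinate is transported tautologically by the same $f$.

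It then remains to record this data as a pair $(X_{z}, R_{z})$ so that the assignment is equivariant --- which gives $z \, \mathsf{E}_{\mathrm{grp}} \, z' \Rightarrow (X_{z}, R_{z}) \cong_{(3)} (X_{z'}, R_{z'})$ at once, the acting group element itself providing the witnessing homeomorphism --- and so that, conversely, every homeomorphism realizing $\cong_{(3)}$ is forced to implement an element of the $G$-action. The three coordinates of $R_{z} \subseteq X_{z}^{3}$ are to be used simultaneously to carry the index $k$ of a reference point, the identity of the group element to which a given value belongs (so that the values $g(p_{k})$ are correctly bundled across $k$ for a single $g$), and the value $g(p_{k})$ itself; closing up over the relevant closed subset of $G$ produces a compact ternary relation, and the verification that the $I$-type constructions render $z \mapsto (X_{z}, R_{z})$ Borel is routine and I would omit it as the paper does.

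The main obstacle is the rigidity half of the correspondence. The difficulty is precisely that the tautological action of $\operatorname{Homeo}(\mathcal{Q})$ is extremely flexible: it fixes no points and no distinguished subsets with which to pin down the countable system of reference indices, so a structure-preserving homeomorphism could a priori scramble the labels and fail to arise from a single group element. I would defeat this by building the index set into $(X_{z}, R_{z})$ rigidly, so that any homeomorphism respecting $R_{z}$ is forced to preserve the labelling, hence to agree on the dense set $\{p_{k}\}$ with one fixed $g \in G$, and therefore to equal $g$; together with equivariance this yields $z' = g z$ and thus $z \, \mathsf{E}_{\mathrm{grp}} \, z'$. Establishing this rigidity --- in particular confirming that exactly three coordinates suffice to knit together label, element, and value while keeping $R_{z}$ closed and compact --- is the technical heart of the argument.
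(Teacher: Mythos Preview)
Your proposal takes a genuinely different route from the paper's. The paper does not work with a universal group action at all: it quotes Sabok's theorem that affine homeomorphism of metrizable Choquet simplexes is already $\sim_{B} \mathsf{E}_{\mathrm{grp}}$, and then observes that the affine structure on a compact convex set is captured entirely by the ternary midpoint relation $\Gamma(S) = \{(x,y,z) \in S^{3} : \tfrac{1}{2}x + \tfrac{1}{2}y = z\}$. The reduction is simply $S \mapsto (S,\Gamma(S))$; the forward direction is immediate, and for the converse a $\Gamma$-preserving homeomorphism preserves midpoints, hence by induction all dyadic convex combinations, hence by continuity all convex combinations, and is therefore affine. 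No index-rigidity is required because the convex structure supplies the rigidity wholesale.

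Your approach, by contrast, has a genuine gap at precisely the place you flag as ``the technical heart.'' You propose to encode a point $z$ of the universal $\operatorname{Homeo}(\mathcal{Q})$-space as a pair $(X_{z},R_{z})$ whose three coordinates carry index, group element, and value, but you never actually specify the encoding, and you concede that forcing an arbitrary $R$-preserving homeomorphism to respect the countable labelling is the crux. That is not a detail one can wave past: a single closed ternary relation on a compact space gives very little leverage for pinning down a countable system of labels against the full homeomorphism group, and the kind of marking you gesture at (``building the index set in rigidly'') is exactly what the paper's \emph{subsequent} propositions accomplish---with real work, via the $I(X,A)$ construction and the families $B_{n},C_{n},D_{n},E_{n},F_{n}$---in order to pass \emph{from} $\cong_{(3)}$ \emph{toward} plain homeomorphism. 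In effect you are trying to absorb the content of Propositions~\ref{countable up to permutation} and~\ref{two subsets} into Proposition~\ref{universality of (3)}. As written, equivariance gives one implication, but the converse remains only a hope; without a concrete construction and a verified rigidity argument, this is a strategy sketch rather than a proof.
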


\begin{proof}
Recall that a metrizable, compact, convex subset of a locally convex space is a \textit{metrizable Choquet simplex} if every point is the barycenter of a unique probability measure supported on the extreme boundary (for details, see \cite{MR1863688}, Ch.\ 15). Let $ \mathbb{K}_{\mathrm{Choq}} $ denote the convex subsets of $ \mathcal{Q} $ that are Choquet simplexes. In Section 4 of \cite{MR3176617}, it is shown that $ \mathbb{K}_{\mathrm{Choq}} $ is a Borel subset of $ K(\mathcal{Q}) $, and may be taken to be the standard Borel space of metrizable Choquet simplexes. In \cite{sabok2013completeness}, Sabok shows that $ \mathsf{E}_{\mathrm{grp}} \sim_{B} \; \approx_{a} $, the relation of affine homeomorphism on $ \mathbb{K}_{\mathrm{Choq}} $. Define $ \Gamma : \mathbb{K}_{\mathrm{Choq}} \to K(\mathcal{Q}^{3}) $ by \[ \Gamma(S) = \{(x,y,z) \in S^{3} \mid \frac{1}{2}x + \frac{1}{2}y = z \}. \]
Then we claim $ S \mapsto (S, \Gamma(S)) $ witnesses the reduction $ \approx_{a} \; \leqslant_{B} \; \cong_{(3)} $.  Clearly if $ S \approx_{a} T $, and $ f: S \to T $ is an affine homeomorphism, then  $ \frac{1}{2}x + \frac{1}{2}y = z $ iff $ \frac{1}{2}f(x) + \frac{1}{2}f(y) = f(z) $, and so $ f^{3}[\Gamma(S)] = \Gamma(T) $, and so $ (S,\Gamma(S)) \cong_{(3)} (T, \Gamma(T)) $.

Conversely, suppose $ f $ is a homeomorphism of $ S $ and $ T $ with $ f^{3}[\Gamma(S)] = \Gamma(T) $, $ x, y \in S $, and $ \lambda \in (0,1) $. We must show that $ f(\lambda x + (1- \lambda) y) = \lambda f(x) + (1-\lambda  ) f(y) $. The case where $ \lambda = \frac{1}{2} $ is already done, and by induction, so too are the cases where $ \lambda $ has the form $ \frac{m}{2^{n}} $. 
For other $ \lambda $, let $ \lambda_{k} $ be a sequence of dyadic rationals with $ \lambda_{k} \to \lambda $. Then  \[ \lambda_{k} x + (1-\lambda_{k})y \to \lambda x + (1-\lambda)y  \]
and by continuity,  \[ \lambda_{k} f(x) + (1-\lambda_{k})f(y) = f(\lambda_{k} x + (1-\lambda_{k})y) \to f(\lambda x + (1-\lambda)y)   \]
while  \[ \lambda_{k} f(x) + (1-\lambda_{k})f(y) \to \lambda f(x) + (1-\lambda)f(y) \] 
so $ f(\lambda x + (1-\lambda)y) = \lambda f(x) + (1-\lambda)f(y) $. So $ f $ is an affine homeomorphism and $ S \approx_{a} T $.
\end{proof}

Given $ \vec{A} = (A_{1},A_{2},...)$ and $ \vec{B} = (B_{1},B_{2},...) $ in $ K(\mathcal{Q})^{\mathbb{N}} $, say that $ \vec{A} \cong_{\mathrm{perm}} \vec{B} $ if and only if there is an $ f \in \operatorname{Homeo}(\mathcal{Q}) $ and $ \sigma \in S_{\infty} $ such that $ f[A_{n}] = B_{\sigma(n)} $ for all $ n $.
Now, for a pair of compact sets $ (X,R) $, where $ R \subseteq X^{3} $, define
\begin{align*}
\widetilde{X} &= I(X,X) = X \cup \{ \widetilde{a}_{1}, \widetilde{a}_{2},... \} \\
B_{n} &= \{\widetilde{a}_{n}\} \times \widetilde{X}^{2}\\
C_{n} &= \widetilde{X} \times \{\widetilde{a}_{n}\} \times \widetilde{X}\\
D_{n} &= \widetilde{X}^{2} \times \{\widetilde{a}_{n}\}\\
E_{n} &= B_{n} \cup C_{n}\\
F_{n} &= B_{n} \cap D_{n} 
\end{align*}
Let $ \Psi: (X,R) \mapsto (\widetilde{X}^{3},R,B_{1},C_{1},D_{1},E_{1},F_{1},B_{2},C_{2},...) $. Fix an embedding, $ \imath $, of $ (\mathcal{Q} \times [0,1])^{3} $ into $ \mathcal{Q} $ as a Z-set, and let $ (\imath^{*})^{\mathbb{N}}: K((\mathcal{Q} \times [0,1])^{3})^{\mathbb{N}} \to K(\mathcal{Q})^{\mathbb{N}} $ be the induced map. Set $ \Phi = (\imath^{*})^{\mathbb{N}} \circ \Psi $, but for convenience identify a set in $ \Phi(X,R) $ with the corresponding set in $ \Psi(X,R) $. 

\begin{prop} \label{countable up to permutation}
$ \Phi $ is a reduction from $ \cong_{(3)} $ to $ \cong_{\mathrm{perm}} $.
\end{prop}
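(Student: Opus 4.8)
The plan is to verify both implications of the reduction. The forward direction is the routine one. Given a homeomorphism $g : X \to Y$ with $g^{3}[R]=S$, I would first apply Proposition \ref{isolated points} (with $A=X$ and $B=Y$) to extend $g$ to a homeomorphism $\widetilde{g} : \widetilde{X} \to \widetilde{Y}$, where $\widetilde{Y}=I(Y,Y)$; this extension carries the new isolated points bijectively, say $\widetilde{g}(\widetilde{a}_{n}) = \widetilde{b}_{\pi(n)}$ for some permutation $\pi$ of $\mathbb{N}$ (writing $\widetilde b_m$, $B'_m$, $C'_m$, etc. for the data built from $(Y,S)$). Then $\widetilde{g}^{3} : \widetilde{X}^{3} \to \widetilde{Y}^{3}$ is a homeomorphism carrying $R$ to $g^{3}[R]=S$ and each of $B_{n},C_{n},D_{n},E_{n},F_{n}$ to the corresponding set of block $\pi(n)$ in the target. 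Transporting through the Z-set embedding $\imath$ and invoking the Z-set extension property, I extend this to some $F \in \operatorname{Homeo}(\mathcal{Q})$; together with the permutation fixing the positions of $\widetilde{X}^{3}$ and $R$ and acting as $\pi$ on the blocks, $F$ witnesses $\Phi(X,R) \cong_{\mathrm{perm}} \Phi(Y,S)$.

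For the converse, suppose $F \in \operatorname{Homeo}(\mathcal{Q})$ and $\sigma \in S_{\infty}$ witness $\Phi(X,R) \cong_{\mathrm{perm}} \Phi(Y,S)$; identifying the sets of $\Phi$ with those of $\Psi$ and pulling back through $\imath$, I obtain a homeomorphism $\phi : \widetilde{X}^{3} \to \widetilde{Y}^{3}$ that carries the marked sets of $\Psi(X,R)$ onto those of $\Psi(Y,S)$ according to $\sigma$. Since $F$ is a bijection, $\sigma$ is an isomorphism of the inclusion posets of the two families of marked sets. From this I read off the first two coordinates: the cube $\widetilde{X}^{3}$ is the unique maximum and so maps to $\widetilde{Y}^{3}$, giving a homeomorphism $\phi$ of cubes that sends isolated points to isolated points; and since $R\subseteq X^{3}$ contains no isolated point of $\widetilde{X}^{3}$ (points of $X$ are non-isolated in $\widetilde X$) whereas every slice $B_{n},\dots,F_{n}$ does, $R$ is the unique marked set other than the cube meeting no isolated point of $\widetilde{X}^{3}$, hence $\phi[R]=S$.

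The crux is to show $\sigma$ respects the block structure and, within a block, the three product directions. I would first observe that the only inclusions among the marked sets occur inside a single block, so $\sigma$ permutes the blocks; write $\pi(n)$ for the block to which block $n$ is sent. Within one block the relations $F_{n} \subseteq B_{n} \subseteq E_{n}$, $C_{n}\subseteq E_{n}$, and $F_{n}\subseteq D_{n}$ form a \emph{rigid} five-element poset: $E_{n}$ and $D_{n}$ are its two maximal elements, distinguished since $E_{n}$ lies above a two-step chain while $D_{n}$ does not; $B_{n}$ is the unique element both below a maximal and above a minimal element; and $C_{n}$, $F_{n}$ are then separated similarly. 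This rigidity is exactly the purpose of adjoining $E_{n}=B_{n}\cup C_{n}$ and $F_{n}=B_{n}\cap D_{n}$: they break the $S_{3}$-symmetry among the three coordinate directions and force $\phi[B_{n}]=B'_{\pi(n)}$, $\phi[C_{n}]=C'_{\pi(n)}$, and $\phi[D_{n}]=D'_{\pi(n)}$. I expect this rigidity step to be the main obstacle, since without $E_{n},F_{n}$ the three slices would be interchangeable and $\phi$ could permute coordinates, yielding only $g^{3}[R^{\tau}]=S$ for some $\tau\in S_{3}$ rather than $g^{3}[R]=S$.

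It remains to extract a single homeomorphism. Each isolated point $(\widetilde{a}_{i},\widetilde{a}_{j},\widetilde{a}_{k})$ is the unique point of $B_{i}\cap C_{j}\cap D_{k}$, so by the previous step $\phi(\widetilde{a}_{i},\widetilde{a}_{j},\widetilde{a}_{k})$ is the unique point of $B'_{\pi(i)}\cap C'_{\pi(j)}\cap D'_{\pi(k)}$, namely $(\widetilde{b}_{\pi(i)},\widetilde{b}_{\pi(j)},\widetilde{b}_{\pi(k)})$. Thus on the dense set of isolated triples $\phi$ acts coordinatewise through $\pi$, and a routine argument from the density of these triples and the continuity of $\phi$ shows that $\widetilde{a}_{n}\mapsto\widetilde{b}_{\pi(n)}$ extends to a homeomorphism $g:\widetilde{X}\to\widetilde{Y}$ with $\phi=g^{3}$ (the diagonal triples $(\widetilde a_n,\widetilde a_n,\widetilde a_n)$ being used to see that the three coordinate maps coincide). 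Since $g$ sends isolated points to isolated points it restricts to a homeomorphism $X\to Y$, and $\phi[R]=S$ together with $\phi=g^{3}$ gives $g^{3}[R]=S$. Hence $(X,R)\cong_{(3)}(Y,S)$, completing the reduction.
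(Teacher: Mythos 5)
Your proposal is correct and follows essentially the same route as the paper: extend via Proposition \ref{isolated points} and the Z-set extension property in the forward direction, and in the converse use the rigidity of the inclusion poset of the marked sets to pin down the blocks and the three coordinate directions, then recover $g$ from the dense set of isolated triples (with the diagonal ones identifying the three coordinate maps) and conclude via Proposition \ref{isolated points}. The only cosmetic difference is that you identify the position of $R$ by its avoidance of isolated points of $\widetilde{X}^{3}$, whereas the paper uses the fact that $R$ is the unique marked set (besides the cube) disjoint from all the others; both work.
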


\begin{proof}
Suppose $ (X,R) \cong_{(3)} (Y,S) $, witnessed by $ f $. To fix notation, let $ \widetilde{Y} $ = $ Y \cup \{\widetilde{b}_{1},\widetilde{b}_{2},... \} $, and let $ H_{m},...,L_{m} $ denote $ Y $'s versions of $ B_{n},..., F_{n} $, respectively. First, extend $ f $ to a homeomorphism $ \widetilde{f}: \widetilde{X} \to \widetilde{Y} $ as in Proposition \ref{isolated points}. Let $ \tau \in S_{\infty}  $ be $ \tau(n) = m $ iff $ \widetilde{a}_{n} \mapsto \widetilde{b}_{m} $, and let $ \sigma \in S_{\infty} $ be $ \sigma(1) = 1 $, $ \sigma(2) = 2 $, and for $ k\in \mathbb{N} $ and $ 0 \leq j \leq 4 $, $ \sigma(5k+j-2) = 5\tau(k) +j -2 $. In words, $ \sigma $ fixes $ 1 $ and $ 2 $ and otherwise permutes quintuplets of integers according to the back-and-forth.

Now $ \widetilde{f} $ determines a homeomorphism $ \widetilde{f}^{3}: \widetilde{X}^{3} \to \widetilde{Y}^{3} $, which by assumption has $ \widetilde{f}^{3}[R] = f^{3}[R] = S $, and which by construction has $ \widetilde{f}^{3}[B_{n}] = \widetilde{f}^{3}[\{\widetilde{a}_{n}\} \times \widetilde{X}^{2}] = \{\widetilde{b}_{\tau(n)}\} \times \widetilde{X}^{2} = H_{\tau(n)} $ and similarly $ \widetilde{f}^{3}[C_{n}] = I_{\tau(n)} $,...,$ \widetilde{f}^{3}[F_{n}] = L_{\tau(n)} $. So as $ \widetilde{X}^{3} $ is embedded as a Z-set in $ \mathcal{Q} $, extend $ \widetilde{f}^{3} $ to some $ \widehat{f} \in \operatorname{Homeo}(\mathcal{Q}) $. Then $ \widehat{f} $ and $ \sigma $ witness $ (\widetilde{X}^{3},R,B_{1},C_{1},...) \cong_{\mathrm{perm}} (\widetilde{Y}^{3},S,H_{1},I_{1},...) $.

Conversely, assume $ \Phi(X,R) \cong_{\mathrm{perm}} \Phi(Y,S) $ witnessed by $ f \in \operatorname{Homeo}(\mathcal{Q}) $ and $ \sigma \in S_{\infty} $. For $ A,A' \subseteq \mathcal{Q} $, $ A \subseteq A' $ iff $ f[A] \subseteq f[A'] $, and consequently, we may recover information about $ \sigma $ based on how many and which sets a given one contains, or is contained in. First, since every other set in the enumeration of $ \Phi(X,R) $ is a proper subset of the first, $ \widetilde{X}^{3} $, and this is also true of $ \Phi(Y,S) $, we see that $ f[\widetilde{X}^{3}] = \widetilde{Y}^{3} $ (i.e., $ \sigma(1) = 1 $). Among the remaining sets in $ \Phi(X,R) $, $ R $ is the only one that is disjoint from every other, and so $ f[R] = S $ ($ \sigma(2) = 2 $).

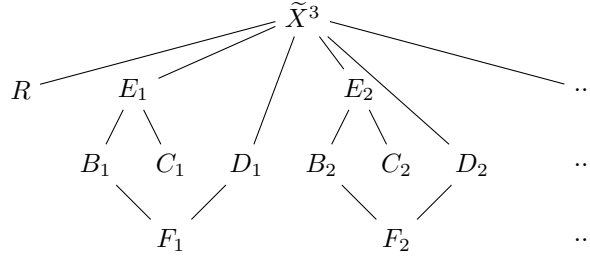
\begin{figure}[h]
\centering
\begin{tikzpicture}[level distance=1cm,level 2/.style={sibling distance=10mm}]
\node (X) {$ \widetilde{X}^{3} $}
	child{ node (R) {$ R $} }
	child{node (E1) { $ E_{1} $}
		child{node (B1) {$ B_{1} $}}
		child{node (C1) {$ C_{1} $}
				child{node (F1) {$ F_{1} $} edge from parent[draw=none]}
			}
		}
	child{
		child{node (D1) {$ D_{1} $} edge from parent[draw=none] }
		edge from parent[draw=none]}
	child{node (E2) {$ E_{2} $}
		child{node (B2) {$ B_{2} $}}
		child{node (C2) {$ C_{2} $}
				child{node (F2) {$ F_{2} $} edge from parent[draw=none] }
			}
		}
	child{
		child{node (D2) {$ D_{2} $} edge from parent[draw=none]}
		edge from parent[draw=none]}
	child{ node {$ ... $}
	child{node {$ ... $}edge from parent [draw=none]
	child{node {$ ... $}edge from parent [draw=none]}}
	}
	
;
\draw (X) -- (D1);
\draw (X) -- (D2);
\draw (B1) -- (F1);
\draw (D1) -- (F1);
\draw (B2) -- (F2);
\draw (D2) -- (F2);
\end{tikzpicture}
\caption{Set containment in $ \Phi(X,R) $}

\end{figure}

So setting aside $ \widetilde{X}^{3} $ and $ R $, for each $ n $, the sets with index $ n $ are distinguished from each other in a similar fashion. For instance, $ F_{n} $ contains no set, and is contained in three. Since $ f[F_{n}] $ must also possess this property, $ f[F_{n}] = L_{m} $ for some $ m $. Likewise, sets labeled with $ B $ are sent to sets labeled with $ H $, $ C $ to $ I $, $ D $ to $ J $, and $ E $ to $ K $.  Let $ \tau $ denote the permutation given by $ f[C_{n}] = I_{\tau(n)} $.

But $ C_{n} \subseteq E_{n} $ requires that $ I_{\tau(n)} \subseteq f[E_{n}] $ and the latter, by the above, is labeled with $ K $. So $ f[E_{n}] = K_{\tau(n)} $.   Similar considerations show that $ f[B_{n}] = H_{\tau(n)} $, $ f[F_{n}] = L_{\tau(n)} $, and $ f[D_{n}] = J_{\tau(n)} $. So, as in the forward direction, $ \sigma $ fixes $ 1 $ and $ 2 $, and otherwise permutes ordered blocks of five according to $ \tau $.

Now, for every $ n $, $ (\widetilde{a}_{n},\widetilde{a}_{n},\widetilde{a}_{n}) $ is the unique element of $ B_{n} \cap C_{n} \cap D_{n} $, and so $ f(\widetilde{a}_{n},\widetilde{a}_{n},\widetilde{a}_{n}) $ is the unique element of $ H_{\tau(n)} \cap I_{\tau(n)} \cap J_{\tau(n)} $, that is, $f(\widetilde{a}_{n},\widetilde{a}_{n},\widetilde{a}_{n}) = (\widetilde{b}_{\tau(n)},\widetilde{b}_{\tau(n)},\widetilde{b}_{\tau(n)}) $. These points are dense in the diagonals of $ \widetilde{X}^{3} $ and $ \widetilde{Y}^{3} $, respectively, and so $ f[\Delta_{\widetilde{X}^{3}}] = f[\overline{\{(\widetilde{a}_{n},\widetilde{a}_{n},\widetilde{a}_{n}) \mid n \in \mathbb{N}  \}}] = \overline{\{(\widetilde{b}_{m},\widetilde{b}_{m},\widetilde{b}_{m}) \mid m \in \mathbb{N}  \}} = \Delta_{\widetilde{Y}^{3}}$. This determines a homeomorphism $ g: \widetilde{X} \to \widetilde{Y} $ by $ g(x) = y $ iff $ f(x,x,x) = (y,y,y) $.

We claim that $ g^{3} = f \upharpoonright \widetilde{X}^{3} $. Note for every $ n $, $ g(\widetilde{a}_{n}) = \widetilde{b}_{\tau(n)} $, and each triple $ (\widetilde{a}_{n},\widetilde{a}_{m},\widetilde{a}_{k}) $ is the unique point in the singleton $ B_{n} \cap C_{m} \cap D_{k} $. So,
\begin{align*}
f[\{(\widetilde{a}_{n},\widetilde{a}_{m},\widetilde{a}_{k})\}] &= f[B_{n} \cap C_{m} \cap D_{k}] = H_{\tau(n)} \cap I_{\tau(m)} \cap J_{\tau(k)}\\
&= \{(\widetilde{b}_{\tau(n)},\widetilde{b}_{\tau(m)},\widetilde{b}_{\tau(k)})  \} = g^{3}[\{(\widetilde{a}_{n},\widetilde{a}_{m},\widetilde{a}_{k})\}]
\end{align*}
and $ f $ and $ g^{3} $ agree on a dense subset of $ \widetilde{X}^{3} $, so are equal. Moreover, $ \widetilde{X} = I(X,X) $, and so by Proposition \ref{isolated points}, $ g $ restricts to a homeomorphism $ X \to Y $, and $ g^{3}[R] = f[R] = S $. So $ g \upharpoonright X $ witnesses $ (X,R) \cong_{(3)} (Y,S) $.
\end{proof}

Consider the space whose members are triples of compact sets, $ (X,B,A) $, such that $ X $ is perfect and $ A \subseteq B \subseteq X $. Let $ \cong_{(1,1)} $ denote the equivalence relation where $ (X,B,A) \cong_{(1,1)} (Y,D,C) $ if and only if there exists a homeomorphism $ f: X \to Y $ such that $ f[A] = C $ and $ f[B] = D $.

\begin{prop} \label{two subsets}
$ \cong_{\mathrm{perm}} \; \leqslant_{B} \; \cong_{(1,1)} $.
\end{prop}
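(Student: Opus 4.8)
The plan is to encode a sequence $\vec{A} = (A_1, A_2, \dots)$ as a single perfect compact space built from one \emph{shared} copy of $\mathcal{Q}$ --- the \emph{core} $C$, inside which all the $A_n$ already sit, since each $A_n \subseteq \mathcal{Q}$ --- together with a countable family of pairwise disjoint connected ``gadgets'' $G_n$, one grown out of each $A_n$. The single core is the crucial design choice: it is what will let us read off \emph{one} ambient homeomorphism of $\mathcal{Q}$ valid for all $n$ at once, as the definition of $\cong_{\mathrm{perm}}$ demands (separate copies for each $n$ would only force a separate homeomorphism per index, which is strictly weaker). Concretely I would take $G_n$ to be a cone over $A_n$ with apex placed off the core, arranged so that $\overline{G_n} \cap C = A_n$, the $G_n$ are pairwise disjoint, and their diameters (in the non-core directions) tend to $0$; then $X := C \cup \bigcup_n G_n$ is compact and perfect and the only limit points it adds lie in $C$. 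I would mark $B := C$ and $A := \overline{\bigcup_n A_n} \subseteq B$, let $\vec{A} \mapsto (X,B,A)$ be the reduction, and transport everything into $K(\mathcal{Q})$ by a Z-set embedding as in the Preliminaries; that this is a Borel map is routine and I would omit it. The two marks earn their keep as follows: $B$ pins down the core, while the finer mark $A$ lets one recover the genuine attaching set as $\overline{G_n} \cap A$, which also accommodates the degenerate case $A_n = \emptyset$ by attaching a fixed anchor structure (an arc to a spare core point outside $A$, or a small perfect blob at the apex) so that every gadget stays perfect and nonempty and empty entries are still counted with multiplicity.

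For the forward direction, writing $(X',B',A',C',G'_n)$ for the data built from a second sequence $\vec{A}'$, suppose $f_0 \in \operatorname{Homeo}(\mathcal{Q})$ and $\sigma \in S_{\infty}$ satisfy $f_0[A_n] = A'_{\sigma(n)}$. Since each gadget is a cone, I would cone the restriction $f_0 \upharpoonright A_n \colon A_n \to A'_{\sigma(n)}$ to a homeomorphism $\overline{G_n} \to \overline{G'_{\sigma(n)}}$ and assemble these with $f_0$ on the core into a bijection $F \colon X \to X'$; continuity at the core holds because $\sigma(n) \to \infty$, so both $G_n$ and its image shrink to the core. Then $F[C] = C'$, so $F[B] = B'$ and $F[A] = A'$, witnessing $(X,B,A) \cong_{(1,1)} (X',B',A')$. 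Conversely, given a homeomorphism $F$ with $F[B] = B'$ and $F[A] = A'$, the mark $B$ forces $F[C] = C'$, so $f_0 := F \upharpoonright C$ is a self-homeomorphism of $\mathcal{Q}$, and $F$ carries $X \setminus C$ onto $X' \setminus C'$ preserving connected components. The structural fact to verify is that the components of $X \setminus C$ are \emph{exactly} the sets $G_n \setminus A_n$ (each connected, being a cone minus its base, and distinct gadgets separated since $\overline{G_n}$ meets $G_m$ only possibly inside $C$). This produces a permutation $\sigma$ with $F[\overline{G_n}] = \overline{G'_{\sigma(n)}}$; intersecting with $A$ and using $F[C] = C'$ gives $f_0[A_n] = A'_{\sigma(n)}$, so $\vec{A} \cong_{\mathrm{perm}} \vec{A}'$. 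Repetitions cause no trouble: even when $A_n = A_m$ the gadgets remain distinct components (their apexes and interiors are disjoint off the core), and $\sigma$ absorbs the multiplicity.

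The main obstacle is the tension between the two competing requirements: a single shared core is needed to extract one ambient homeomorphism simultaneously valid for every $n$, yet the gadgets must be pairwise separated and index-blind so that a mark-preserving homeomorphism can do no more than \emph{permute} them while still recovering each $A_n$ faithfully as $\overline{G_n} \cap A$. Realizing both at once is the delicate point, since all the gadgets attach to the one core: I must place them --- using auxiliary Hilbert-cube coordinates and the vanishing-diameter device alluded to in the Remark above --- so that they are genuinely disjoint, carry no surviving topological link to one another, have closures meeting $C$ in precisely $A_n$, and accumulate \emph{only} onto $C$, all while keeping $X$ compact and perfect and handling the empty-set case. Granting this geometric bookkeeping, the component analysis of $X \setminus C$ together with the coning extension makes the equivalence go through in both directions.
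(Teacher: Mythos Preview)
Your proposal is correct and follows the same skeleton as the paper's proof: one shared core copy of $\mathcal{Q}$, a connected gadget per index attached along $A_n$, the permutation read off from the connected components of the complement of the core, and each $A_n$ recovered as the trace of the closure of the $n$th gadget on the core. The geometric realizations differ, however. The paper takes a \emph{fixed} ambient space $\mathbb{X}=\{(x,y)\in\mathcal{Q}^{2}\mid y\text{ has at most one nonzero coordinate}\}$, whose ``pages'' $\mathcal{X}_n=\{(x,y)\in\mathbb{X}\mid y_n>0\}\cong\mathcal{Q}\times(0,1]$ are identical and do not depend on $\vec{A}$; all information about $\vec{A}$ lives in the middle mark $\mathbb{A}=\mathcal{Q}\cup\bigcup_n(A_n\times(0,1]_n)$, while the inner mark is simply the core $\mathcal{Q}$. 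This buys two simplifications you must work for: the forward-direction homeomorphism is the explicit product $f\times h_{\sigma^{-1}}$ (no gluing of coned maps, no separate continuity check at the core via $\sigma(n)\to\infty$), and empty $A_n$ require no anchor trick, since the page $\mathcal{X}_n$ is present regardless and $\mathcal{A}_n=\emptyset$ is harmless. By contrast you vary $X$ with $\vec{A}$, encode the data in the \emph{shape} of each gadget, and your inner mark $\overline{\bigcup_n A_n}$ is essentially vestigial---it is used only to separate the artificial anchor point for empty entries from genuine attaching loci, whereas in the paper both marks do real work. Both routes succeed; the paper's trades your placement and gluing bookkeeping (which you rightly flag as the main obstacle) for a one-line coordinate computation.
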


\begin{proof}
Let $ \mathbb{X} = \{(x,y) \in \mathcal{Q}^{2} \mid \forall m \neq n, y_{m} = 0 \vee y_{n} = 0 \} $. That is, $ \mathbb{X} $ consists of those points for which $ y $ has at most one nonzero coordinate. Identify $ \mathcal{Q}  $ with $ \mathcal{Q} \times \{\vec{0}\} $. For $ \vec{A} = (A_{1},A_{2},...) \in K(\mathcal{Q})^{\mathbb{N}} $, let $ \mathbb{A} = \{(x,y) \in \mathbb{X} \mid \forall n, y_{n} = 0 \vee x \in A_{n}  \} $. Let $ \Phi: \vec{A} \mapsto (\mathbb{X}, \mathbb{A}, \mathcal{Q})  $.

Suppose $ \vec{A} \cong_{\mathrm{perm}} \vec{B} $ via $ f $ and $ \sigma $. Let $ h_{\sigma} $ denote the homeomorphism of $ \mathcal{Q} $ taking $ (y_{1},y_{2},...) \mapsto (y_{\sigma(1)}, y_{\sigma(2)},...) $. Consider the homeomorphism $ f \times h_{\sigma^{-1}}:\mathcal{Q}^{2} \to \mathcal{Q}^{2} $. Since, for any $ (x,y) $, $ y $ has at most one nonzero coordinate iff $ h_{\sigma^{-1}}(y) $ has at most one nonzero coordinate, we have that $ (f \times h_{\sigma^{-1}})[\mathbb{X}] = \mathbb{X} $. Also,
\begin{align*}
	(x,y) \in \mathbb{A} & \text{ iff } \forall n, x \in A_{n} \vee y_{n} = 0 \\
	&\text{ iff } \forall n, f(x) \in f[A_{n}] = B_{\sigma(n)} \vee (h_{\sigma^{-1}}(y))_{\sigma(n)} = y_{n} = 0\\
	&\text{ iff } \forall m, f(x) \in B_{m} \vee h_{\sigma^{-1}}(y)_{m} = 0\\
	&\text{ iff }(f \times h_{\sigma^{-1}} )(x,y) \in \mathbb{B}
\end{align*}
showing $ (f \times h_{\sigma^{-1}}) [\mathbb{A}] = \mathbb{B} $. Finally, since $ y = \vec{0} $ iff $ h_{\sigma^{-1}}(y) = \vec{0} $, $ (f \times h_{\sigma^{-1}})[\mathcal{Q}] = \mathcal{Q} $. So the restriction $ (f \times h_{\sigma^{-1}})\upharpoonright \mathbb{X} $ witnesses $ (\mathbb{X},\mathbb{A},\mathcal{Q}) \cong_{(1,1)}(\mathbb{X},\mathbb{B},\mathcal{Q}) $.

So suppose there is a homeomorphism $ g:\mathbb{X} \to \mathbb{X} $ with $ g[\mathbb{A}] = \mathbb{B} $ and $ g[\mathcal{Q}] = \mathcal{Q} $. Since $ g $ fixes $ \mathcal{Q} $ set-wise, it must also fix $ \mathbb{X} \setminus \mathcal{Q} $ set-wise. Now, we may write $ \mathbb{X} \setminus \mathcal{Q} $ as the disjoint union of sets $ \mathcal{X}_{n} = \{(x,y) \in \mathbb{X} \mid x \in \mathcal{Q} \text{ and } y_{n} > 0  \} $. Each $ \mathcal{X}_{n} $, being homeomorphic to $ \mathcal{Q} \times (0,1] $, is path-connected, but no path exists within $ \mathbb{X} \setminus \mathcal{Q} $ connecting distinct $ \mathcal{X}_{n} $ and $ \mathcal{X}_{m} $. Therefore, $ g $ induces a permutation $ \sigma $, by $ \sigma(n) = m $ iff $ g[\mathcal{X}_{n}] = \mathcal{X}_{m} $. Let $ \mathcal{A}_{n} = \mathcal{X}_{n} \cap \mathbb{A} $ and $ \mathcal{B}_{n} = \mathcal{X}_{n} \cap \mathbb{B} $. Then $ g[\mathcal{A}_{n}] = \mathcal{X}_{\sigma(n)} \cap \mathbb{B} = \mathcal{B}_{\sigma(n)} $.

Now, $ \mathcal{A}_{n} = \{(x,y) \mid x \in A_{n} \text{ and } y_{n} > 0  \} $. So since $ A_{n} $ is closed, if $ (x^{k},y^{k})_{k \in \mathbb{N}} $ is a sequence of points in $ \mathcal{A}_{n} $ converging to $ (x,y)  \in \mathbb{X}$, then $ x \in A_{n} $, $ y_{n} \in [0,1] $, and $ y_{m} = 0 $ for $ m \neq n $. On the other hand, given $ x \in A_{n} $, $ (x,(0,0,...,0,\frac{1}{k},0,...)) $ is a sequence in $ \mathcal{A}_{n} $ converging to $ (x,\vec{0}) $. Therefore, $ \overline{\mathcal{A}_{n}} = \mathcal{A}_{n} \cup (A_{n} \times \{\vec{0}\}) $. This holds similarly for the $ \mathcal{B}_{n} $. Identifying $ A_{n} $ and $ B_{n} $ with their copies in $ \mathcal{Q} = \mathcal{Q} \times \{\vec{0}\} $, for every $ n $, $ g[A_{n}] = g[\mathcal{Q} \cap \overline{\mathcal{A}_{n}}] = g[\mathcal{Q}] \cap g[\overline{\mathcal{A}_{n}}] = \mathcal{Q} \cap \overline{\mathcal{B}_{\sigma(n)}} = B_{\sigma(n)} $.
So $ g \upharpoonright \mathcal{Q} $ and $ \sigma $ witness $ \vec{A} \cong_{\mathrm{perm}} \vec{B} $.
\end{proof}

Let $ \approx $ denote the homeomorphism relation on compact metric spaces.

\begin{proof}[Proof of Theorem \ref{universal homeomorphism}]
By Propositions \ref{universality of (3)}, \ref{countable up to permutation}, and \ref{two subsets}, $ \mathsf{E}_{\mathrm{grp}} \leqslant_{B} \; \cong_{(1,1)} $, the latter defined on the class of triples, $ (X,B,A) $, where $ X $ is perfect and $ A \subseteq B \subseteq X $. Let $ I_{2}(X,B,A) = I(I(X,A),B \cup \{\widetilde{a}_{1},\widetilde{a}_{2},...\}) $. That is, $ I_{2}(X,B,A) $ is obtained as in Proposition \ref{isolated points} by first adding a set of isolated points,$ \{\widetilde{a}_{1},\widetilde{a}_{2},...\} $, to $ X $ marking off $ A $, and then adding new isolated points marking off $ B $ and the $ \widetilde{a}_{n} $'s. Note that $ B \cup \{\widetilde{a}_{1}, \widetilde{a}_{2},...\} $ is closed in $ I(X,A) $, since the limit points of the $ \widetilde{a}_{n} $ are in $ A \subseteq B $. Furthermore, in both steps we have respected the restriction on isolated points imposed in the definition of $ I $, in the first step because $ X $ is perfect, and in the second because the only isolated points are the $ \widetilde{a}_{n} $. Therefore, by repeated applications of Proposition \ref{isolated points}:

If $ f: X \to Y $ is a homeomorphism with $ f[A] = C $ and $ f[B] = D $, then $ f $ extends to a homeomorphism $ I(X,A) \to I(Y,C) $. Since this must send isolated points to isolated points, it must take $ B \cup \{\widetilde{a}_{1},\widetilde{a}_{2},...\} $ to $D \cup \{\widetilde{c}_{1},\widetilde{c}_{2},...\} $. Therefore it extends to a homeomorphism $ I_{2}(X,B,A) \to I_{2}(Y,D,C) $.

Conversely, if $ g: I_{2}(X,B,A) \to I_{2}(Y,D,C) $ is a homeomorphism, it restricts to a homeomorphism $ I(X,A) \to I(Y,C) $ with $ f[B \cup \{\widetilde{a}_{1},\widetilde{a}_{2},...\}] = D \cup \{\widetilde{c}_{1},\widetilde{c}_{2},...\} $. Further, this restricts to a homeomorphism $ X \to Y $ taking $ f[A] = C $. But then $ f[B] = f[(B \cup \{\widetilde{a}_{1},\widetilde{a}_{2},...\}) \cap X] = (D \cup \{\widetilde{c}_{1},\widetilde{c}_{2},...\}) \cap Y = D $, and $ (X,B,A) \cong_{(1,1)} (Y,D,C) $. So $ I_{2} $ is a reduction to $ \approx $.
\end{proof}

\bibliographystyle{amsalpha}
\bibliography{HomeomorphismUniversality}

\end{document}